\documentclass[12pt,oneside]{article}
\usepackage{amsmath,amssymb,amsfonts,amsthm}
\usepackage{centernot}
\usepackage{stmaryrd}
\usepackage{lscape}
\textheight = 9.5in            
\textwidth = 6in \leftmargin=1.25in \rightmargin=1.25in
\topmargin=0.75in
\parindent=0.3in
\hoffset -1.3truecm \voffset -3truecm


\newcommand{\T}{{\cal T}}

\newcommand{\set}[1]{\left\{#1\right\}}

\newcommand {\cp}{\mathfrak{X}(\pi (M))}


\setlength\arraycolsep{2pt}    

\def\Section#1{\vspace{30truept}\addtocounter{section}{1}\setcounter{thm}{0}
\setcounter{equation}{0}{\noindent\Large\bf
    \arabic{section}.~~#1}\par \vspace{12pt}}

\newtheorem{thm}{Theorem}[section]
\newtheorem{cor}[thm]{Corollary}

\newtheorem{defn}[thm]{Definition}

\newtheorem{rem}[thm]{Remark}


\numberwithin{equation}{section}

\begin{document}
\title{{{\bf On Hyper-Generalized Recurrent Finsler Spaces }}}
\author{{\bf  A. Soleiman}}
\date{}

\maketitle                     
\vspace{-1.15cm}

\begin{center}
{Department of Mathematics, Faculty of Science,\\ Benha University, Benha,  Egypt}
\end{center}
\vspace{-0.8cm}
\begin{center}
amrsoleiman@yahoo.com, amr.hassan@fsci.bu.edu.eg
\end{center}
\smallskip

\vspace{1cm} \maketitle
\smallskip
\noindent{\bf Abstract.}
The aim of the present paper is to investigate new types of recurrence in  Finsler geometry, namely, hyper-generalized recurrence and generalized conharmonic recurrence. The properties of such recurrences and  their relations to other  Finsler recurrences are studied.

\medskip
\noindent{\bf Keywords:\/}\,   Ricci recurrent; generalized recurrent;
 concircularly recurrent; hyper-generalized recurrent; conharmonically recurrent; generalized conharmonically recurrent.

\medskip
\noindent{\bf MSC 2010}: 53C60, 53B40, 58B20.
\bigskip


\begin{center}
\large{\bf Introduction}.
\end{center}
Many types of recurrence in Riemannian geometry  have been studied by many authors \cite{R3,  R4, R2, R1, R3a, R5}.
 On the other hand, some types of recurrence in Finsler geometry have been also studied \cite{F3, F2, amr3, F1, Types}.
\par
In  recent papers \cite{amr3, Types}, we have introduced and investigated \textmd{intrinsically} four classes of recurrence:
recurrence under projective change, simple recurrence, Ricci recurrence and concircular recurrence. Each of these classes consists of several types of recurrence.  We also investigated the interrelationships between the different types of recurrence.
\par
The present paper is a continuation of ~\cite{amr3, Types}, where we introduce and investigate other new types of  Finsler recurrences, namely, hyper-generalized recurrence and generalized conharmonic recurrence. The properties of such recurrences and  their relations to other  Finsler recurrences are obtained.  The results of this paper globalize and generalize some results of \cite{R1}.

\newpage

\Section{Notation and Preliminaries}

In this section, we give a brief account of the basic concepts
 of the pullback approach to intrinsic Finsler geometry necessary for this work. For more
 details, we refer to \cite{r58, A. Tamim,  r86, Cart. Ber., r94, r96}. We
 shall use the notations of \cite{r86}.

 In what follows, we denote by $\pi: \T M\longrightarrow M$ the subbundle of nonzero vectors
tangent to $M$, $\mathfrak{F}(TM)$ the algebra of $C^\infty$ functions on $TM$, $\cp$ the $\mathfrak{F}(TM)$-module of differentiable sections of the pullback bundle $\pi^{-1}(T M)$.
The elements of $\mathfrak{X}(\pi (M))$ will be called $\pi$-vector
fields and will be denoted by barred letters $\overline{X} $. The
tensor fields on $\pi^{-1}(TM)$ will be called $\pi$-tensor fields.
The fundamental $\pi$-vector field is the $\pi$-vector field
$\overline{\eta}$ defined by $\overline{\eta}(u)=(u,u)$ for all
$u\in \T M$.
\par
We have the following short exact sequence of vector bundles
$$0\longrightarrow
 \pi^{-1}(TM)\stackrel{\gamma}\longrightarrow T(\T M)\stackrel{\rho}\longrightarrow
\pi^{-1}(TM)\longrightarrow 0 ,\vspace{-0.1cm}$$ with the well known
definitions of  the bundle morphisms $\rho$ and $\gamma$. The vector
space $V_u (\T M)= \{ X \in T_u (\T M) : d\pi(X)=0 \}$  is the vertical space to $M$ at $u$.
\par
Let $D$ be  a linear connection on the pullback bundle $\pi^{-1}(TM)$.
 We associate with $D$ the map \vspace{-0.1cm} $K:T \T M\longrightarrow
\pi^{-1}(TM):X\longmapsto D_X \overline{\eta} ,$ called the
connection map of $D$.  The vector space $H_u (\T M)= \{ X \in T_u
(\T M) : K(X)=0 \}$ is called the horizontal space to $M$ at $u$ .
   The connection $D$ is said to be regular if
$$ T_u (\T M)=V_u (\T M)\oplus H_u (\T M) \,\,\,  \forall \, u\in \T M.$$

If $M$ is endowed with a regular connection, then the vector bundle
   maps $
 \gamma,\, \rho |_{H(\T M)}$ and $K |_{V(\T M)}$
 are vector bundle isomorphisms. The map
 $\beta:=(\rho |_{H(\T M)})^{-1}$
 will be called the horizontal map of the connection
$D$.
\par
 The horizontal ((h)h-) and
mixed ((h)hv-) torsion tensors of $D$, denoted by $Q $ and $ T $
respectively, are defined by \vspace{-0.2cm}
$$Q (\overline{X},\overline{Y})=\textbf{T}(\beta \overline{X}\beta \overline{Y}),
\, \,\,\, T(\overline{X},\overline{Y})=\textbf{T}(\gamma
\overline{X},\beta \overline{Y}) \quad \forall \,
\overline{X},\overline{Y}\in\mathfrak{X} (\pi (M)),\vspace{-0.2cm}$$
where $\textbf{T}$ is the (classical) torsion tensor field
associated with $D$.
\par
The horizontal (h-), mixed (hv-) and vertical (v-) curvature tensors
of $D$, denoted by $R$, $P$ and $S$
respectively, are defined by
$$R(\overline{X},\overline{Y})\overline{Z}=\textbf{K}(\beta
\overline{X}\beta \overline{Y})\overline{Z},\quad
 {P}(\overline{X},\overline{Y})\overline{Z}=\textbf{K}(\beta
\overline{X},\gamma \overline{Y})\overline{Z},\quad
 {S}(\overline{X},\overline{Y})\overline{Z}=\textbf{K}(\gamma
\overline{X},\gamma \overline{Y})\overline{Z}, $$
 where $\textbf{K}$
is the (classical) curvature tensor field associated with $D$.
\par
The contracted curvature tensors of $D$, denoted by $\widehat{{R}}$, $\widehat{ {P}}$ and $\widehat{ {S}}$ (known
also as the (v)h-, (v)hv- and (v)v-torsion tensors respectively), are defined by
$$\widehat{ {R}}(\overline{X},\overline{Y})={ {R}}(\overline{X},\overline{Y})\overline{\eta},\quad
\widehat{ {P}}(\overline{X},\overline{Y})={
{P}}(\overline{X},\overline{Y})\overline{\eta},\quad \widehat{
{S}}(\overline{X},\overline{Y})={
{S}}(\overline{X},\overline{Y})\overline{\eta}.$$
\begin{thm} {\em\cite{r94}} \label{th.1} Let $(M,L)$ be a Finsler
manifold and  $g$ the Finsler metric defined by $L$. There exists a
unique regular connection $\nabla$ on $\pi^{-1}(TM)$ such
that\vspace{-0.2cm}
\begin{description}
  \item[(a)]  $\nabla$ is  metric\,{\em:} $\nabla g=0$,

  \item[(b)] The (h)h-torsion of $\nabla$ vanishes\,{\em:} $Q=0
  $,
  \item[(c)] The (h)hv-torsion $T$ of $\nabla$\, satisfies\,\textmd{:}
   $g(T(\overline{X},\overline{Y}), \overline{Z})=g(T(\overline{X},\overline{Z}),\overline{Y})$.
\end{description}
\par
 Such a connection is called the Cartan
connection associated with  the Finsler manifold $(M,L)$.
\end{thm}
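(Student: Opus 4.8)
The plan is to prove uniqueness first, by showing that each of the two pieces of $\nabla$ --- its vertical covariant derivative, in the directions $\gamma\overline{X}$, and its horizontal covariant derivative, in the directions $\beta\overline{X}$ --- is completely forced by conditions (a)--(c), and then to prove existence by writing down explicit local formulas for these pieces and checking directly that they assemble into a regular connection with properties (a), (b), (c). Throughout I would use the following standard facts about the basic exact sequence and a regular connection: $\rho\circ\gamma=0$; the bracket of two vertical vector fields is vertical, so $\mathbf{T}(\gamma\overline{X},\gamma\overline{W})=0$ for every regular connection and all $\overline{X},\overline{W}$; the connection map satisfies $K\circ\gamma=\mathrm{id}$ and $K\circ\beta=0$, whence $\nabla_{\beta\overline{X}}\,\overline{\eta}=K(\beta\overline{X})=0$; and the homogeneity of $L$, which makes $g$ positively homogeneous of degree zero in the fibre variable, so that any contraction of the Cartan tensor with $\overline{\eta}$ vanishes.

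For uniqueness, let $\nabla$ and $\nabla'$ both satisfy (a)--(c) and set $A(X,\overline{Y}):=\nabla_X\overline{Y}-\nabla'_X\overline{Y}$, a $\pi^{-1}(TM)$-valued tensor on $\T M$. I would first treat the vertical part. Since $\rho\circ\gamma=0$ and since the difference of the two horizontal lifts is vertical, the identity $\mathbf{T}(\gamma\overline{X},\gamma\overline{W})=0$ gives $T(\overline{X},\overline{Y})-T'(\overline{X},\overline{Y})=A(\gamma\overline{X},\overline{Y})$, independently of which horizontal lift one uses. Condition (a) makes $g(A(\gamma\overline{X},\overline{Y}),\overline{Z})$ skew-symmetric in $(\overline{Y},\overline{Z})$, while condition (c), applied to both $\nabla$ and $\nabla'$, makes it symmetric in $(\overline{Y},\overline{Z})$; hence it vanishes, and nondegeneracy of $g$ shows that the vertical covariant derivatives of $\nabla$ and $\nabla'$ coincide. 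In local coordinates this is exactly $\nabla_{\dot\partial_k}\overline{\partial}_j=\tfrac12\,g^{il}(\dot\partial_k g_{lj})\,\overline{\partial}_i$, the Cartan tensor.

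Next comes the horizontal part, and the delicate point is that the horizontal splitting is itself determined by the connection through $K=\nabla_{(\cdot)}\overline{\eta}$, so conditions (a)--(c) are, in effect, self-referential. To break this I would first pin down the induced nonlinear connection: putting $\overline{Y}=\overline{\eta}$ in $Q=0$ and using $\nabla_{\beta\overline{X}}\,\overline{\eta}=0$ yields $\nabla_{\beta\overline{\eta}}\,\overline{X}=\rho[\beta\overline{\eta},\beta\overline{X}]$, and combining this with the already-determined vertical part and the homogeneity of $L$ forces the nonlinear connection of $\nabla$ to be the canonical nonlinear connection of $(M,L)$. Equivalently, in coordinates, the relation $N^i_j=\Gamma^i_{jk}y^k$ --- which comes from $K\circ\beta=0$ once $\dot\partial$-contractions of the Cartan tensor are known to vanish --- together with the Koszul expression for $\Gamma^i_{jk}$ in terms of the $\delta$-derivatives $\delta_k g_{ij}=(\partial_k-N^l_k\dot\partial_l)g_{ij}$ forms a closed system whose unique solution is the classical Finsler Christoffel construction. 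With $\beta=\beta'$ established, $Q=0$ for both connections makes $A(\beta\overline{X},\overline{Y})$ symmetric in $(\overline{X},\overline{Y})$ and (a) makes $g(A(\beta\overline{X},\overline{Y}),\overline{Z})$ skew-symmetric in $(\overline{Y},\overline{Z})$; the standard threefold cyclic manipulation then gives $A(\beta\overline{X},\overline{Y})=0$, so $\nabla=\nabla'$.

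For existence I would define $\nabla$ by the formulas produced above --- the canonical nonlinear connection together with the horizontal and vertical Christoffel symbols --- and check directly that it is a regular linear connection on $\pi^{-1}(TM)$ and that (a), (b), (c) hold; these verifications are routine, with (c) immediate from the total symmetry of $\tfrac12\,\dot\partial_k g_{ij}$ (a consequence of $g_{ij}=\tfrac12\,\dot\partial_i\dot\partial_j L^2$) and (b) from the symmetry of the horizontal coefficients. I expect the genuine obstacle to be the middle step: the two Koszul-type computations are mechanical, but extracting the nonlinear connection --- precisely because the horizontal distribution is not given in advance but is cut out by $\nabla$ itself --- requires untangling the coupled system relating it to the linear coefficients, and this is exactly where the homogeneity of $L$ is indispensable. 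A cleaner alternative would be to establish first, by a separate argument using the energy function and the geodesic spray, that $(M,L)$ carries a canonical nonlinear connection, and only then run the Koszul arguments with that splitting held fixed.
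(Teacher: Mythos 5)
The paper does not actually prove this statement: Theorem \ref{th.1} is imported verbatim from the reference \cite{r94}, so there is no in-paper argument to compare against, and your outline has to be judged against the standard global proof in that reference --- which it does follow in broad structure. Your treatment of the vertical part of uniqueness is correct (the difference tensor $g(A(\gamma\overline{X},\overline{Y}),\overline{Z})$ is skew in $(\overline{Y},\overline{Z})$ by metricity and symmetric by condition (c), hence zero, using that $\rho[\gamma\overline{X},V]=0$ for vertical $V$ so the comparison of the two $(h)hv$-torsions is independent of the horizontal lifts), and the final Koszul-type cyclic argument once $\beta=\beta'$ is known is also fine.

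The genuine gap is exactly the step you yourself flag as "the genuine obstacle" but then do not carry out: the claim that (a)--(c) force the horizontal map $\beta$ (equivalently the nonlinear connection $N^i_j$) to be the canonical one. Saying that $N^i_j=\Gamma^i_{jk}y^k$ together with the Koszul expression for $\Gamma^i_{jk}$ in the $\delta$-derivatives "forms a closed system whose unique solution is the classical Finsler Christoffel construction" restates the hard part of the theorem rather than proving it, since the $\delta$-derivatives themselves contain the unknown $N^i_j$. The missing computation is the one that eliminates this circularity: contract the horizontal Koszul identity with $\overline{\eta}$, use $\nabla_{\beta\overline{X}}\overline{\eta}=0$, $Q=0$, and the vanishing of all $\overline{\eta}$-contractions of the Cartan tensor (homogeneity of $L$) to remove every occurrence of the unknown horizontal lift from the right-hand side, and conclude that $N^i_j$ must equal $\frac{1}{2}\dot{\partial}_j(\gamma^i_{kl}y^ky^l)$, the Barthel connection of the geodesic spray. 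Without this, both halves of your argument are incomplete, because every subsequent step quantifies over "the" horizontal distribution. Note also that your proposed cleaner alternative --- constructing the canonical nonlinear connection first from the energy function --- repairs existence but not uniqueness: you would still have to prove that any connection satisfying (a)--(c) induces that particular splitting.
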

The only linear connection we deal with in this paper is the Cartan connection.


\Section{Hyper-generalized recurrence}

In this section, we introduce and study a new special Finsler space called hyper generalized recurrent  Finsler  spaces.
 The properties of such spaces are investigated. Some relations between such recurrence and  other  Finsler recurrences are obtained.

\smallskip
For a Finsler manifold  $(M,L)$,  we set the following notations:
\vspace{-8pt}
\begin{eqnarray*}
\stackrel{h}\nabla&:&\text{the $h$-covariant derivatives associated
with Cartan connection }\nabla,\\
\text{Ric} &:& \text{the  horizontal  Ricci curvature tensor of Cartan connection},\\
\text{Ric}_{o} &:& \text{the  horizontal  Ricci vector form defined by }\\
 {\qquad\qquad\qquad}&& g(\text{Ric}_{o} \overline{X},\overline{Y})=\text{Ric}(\overline{X},\overline{Y})   ,\\
 r &:& \text{the horizontal scalar curvature of Cartan connection},\\
G(\overline{X},\overline{Y})\overline{Z} &:=& g(\overline{X},\overline{Z})
\overline{Y}-g(\overline{Y},\overline{Z})\overline{X},\\
C &:=& {R}-\frac{r}{n(n-1)}\, {G}: \text{ the concircular curvature tensor},  \\
{\textbf{G}}(\overline{X},\overline{Y},\overline{Z},\overline{W}) &:=& g(G(\overline{X},\overline{Y})\overline{Z},\overline{W}),\\
{\textbf{C}}(\overline{X},\overline{Y},\overline{Z},\overline{W}) &:=& g(C(\overline{X},\overline{Y})\overline{Z},\overline{W}),\\
{\textbf{R}}(\overline{X},\overline{Y},\overline{Z},\overline{W}) &:=& g(R(\overline{X},\overline{Y})\overline{Z},\overline{W}).
\end{eqnarray*}

A Finsler manifold is said to be horizontally integrable if its
horizonal distribution is completely integrable or, equivalently, if $\widehat{R}=0$.

\smallskip

Firstly, for  a Finsler manifold  of dimension $n\geq3$ with non-zero $h$-curvature tensor
 ${R}$, we define the Kulkarni-Nomizu product $g \wedge \textmd{\text{Ric}}$ of the Finsler metric $g$ and the Ricci curvature tensor $ \emph{\text{Ric}}$
  \cite{R1}:
 \begin{eqnarray}
    (g \wedge \textmd{\text{Ric}})(\overline{X},\overline{Y}, \overline{Z}, \overline{W}) &:=&g(\overline{X},\overline{Z}) \textmd{\text{Ric}}(\overline{Y},\overline{W}) +g(\overline{Y},\overline{W})  \textmd{\text{Ric}}(\overline{X},\overline{Z}) \nonumber \\
   && -g(\overline{X},\overline{W}) \textmd{\text{Ric}}(\overline{Y},\overline{Z})
    -g(\overline{Y},\overline{Z}) \textmd{\text{Ric}}(\overline{X},\overline{W}).\label{eq.b1}
 \end{eqnarray}

\begin{rem}\label{rem.1} \textmd{One can show that} $g \wedge g =2 {\bf G}$.
\end{rem}

\begin{defn}\label{def.1a} Let $(M,L)$ be a Finsler manifold  of dimension $n\geq3$ with non-zero $h$-curvature tensor
 ${R}$. Then, $(M,L)$ is called  hyper-generalized recurrent Finsler manifold, denoted by $HGF_{n}$,  if
 $$\stackrel{h}\nabla {\textbf{R}}=A\otimes {\textbf{R}}+ B \otimes (g\wedge \textmd{\text{Ric}}) ,$$
       where $A$ and $B$  are nonzero scalar 1-forms  on $TM$, called the recurrence forms.
\end{defn}

   The following result gives some properties for the hyper-generalized recurrent Finsler manifold.

\begin{thm}\label{thm.3}  Let $(M,L)$ be a horizontally integrable hyper-generalized recurrent Finsler manifold
 of dimension $n\geq3$ with recurrence forms $A$ and $B$. Then, we have\,:\vspace{-0.2cm}
\begin{description}
\item[(a)] if $r\neq0$, then $(M,L)$ is generalized  Ricci recurrent $(\textmd{GRF}_{n})$.
\item[(b)] if $r$ is a nonzero constant, then $(M,L)$ is generalized  2-Ricci recurrent $(\textmd{G(2RF}_{n}))$.
\item[(c)] if $r$ is  non-constant, then the following relation holds
$$(A +(n-2) B)\circ\textmd{\text{Ric}}_{o}=\frac{r}{2}(A+2(n-2)B).$$
\end{description}
\end{thm}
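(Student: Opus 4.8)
The plan is to start from the defining relation $\stackrel{h}\nabla {\textbf{R}}=A\otimes {\textbf{R}}+ B \otimes (g\wedge \text{Ric})$ and contract it with the metric $g$ to pass from the $(0,4)$-curvature $\textbf{R}$ to the Ricci tensor. Concretely, I would apply a $g$-trace over the first and last (or second and third) slots; since $\stackrel{h}\nabla g=0$ by Theorem~\ref{th.1}(a), the covariant derivative commutes with this contraction, so the left-hand side becomes $\stackrel{h}\nabla\text{Ric}$ (up to sign conventions fixed by how Ric is defined as a trace of $R$). On the right, contracting $A\otimes\textbf{R}$ gives $A\otimes\text{Ric}$, while contracting the Kulkarni--Nomizu term $g\wedge\text{Ric}$ is the standard computation: using $g(\overline{X},\overline{X})$-type traces one gets a combination $\text{Ric}$ and $r\,g$ with coefficients involving $n$ and $n-2$. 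This yields an identity of the schematic form
$$\stackrel{h}\nabla\text{Ric}=(A+(n-2)B)\otimes\text{Ric}+ (\text{coeff})\,B\otimes(r\,g),$$
which is the key intermediate equation from which all three parts follow.

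For part (a), when $r\neq 0$ I would rewrite the $r\,g$ term as $(rB)\otimes g$ and observe that this is exactly the shape of the generalized Ricci recurrence condition $\stackrel{h}\nabla\text{Ric}=A'\otimes\text{Ric}+B'\otimes g$ with $A'=A+(n-2)B$ and $B'$ a nonzero multiple of $rB$ (nonzero because $B$ is a nonzero recurrence form and $r\neq0$); one should double-check that $A'$ is not forced to vanish, or simply note that GRF$_n$ only requires the two forms to exist, not to be nonzero — I would follow whatever convention \cite{amr3,Types} uses. For part (b), if $r$ is a nonzero constant then $\stackrel{h}\nabla(r\,g)=r\stackrel{h}\nabla g=0$, so applying $\stackrel{h}\nabla$ a second time to the part~(a) relation kills the $g$-term's derivative and produces a relation of the form $\stackrel{h}\nabla\stackrel{h}\nabla\text{Ric}=(\dots)\otimes\text{Ric}+(\dots)\otimes g$, i.e. the generalized 2-Ricci recurrent condition; the horizontal integrability hypothesis ($\widehat{R}=0$) is what I expect to be needed to control the commutator terms / Ricci identities that appear when iterating $\stackrel{h}\nabla$.

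For part (c), the idea is to extract more from the intermediate equation by a further contraction or by pairing with $\text{Ric}_o$. Taking the $g$-trace of $\stackrel{h}\nabla\text{Ric}=(A+(n-2)B)\otimes\text{Ric}+(\text{coeff})rB\otimes g$ gives $\stackrel{h}\nabla r$ on the left (again by metric compatibility) and $(A+(n-2)B)r + n(\text{coeff})rB$ on the right, which relates $\stackrel{h}\nabla r$ to $r(A+2(n-2)B)$ after bookkeeping the constants. Then, to see the operator $\text{Ric}_o$ appear, I would instead contract the intermediate $(0,3)$-identity once more against the Ricci vector form — using $g(\text{Ric}_o\overline{X},\overline{Y})=\text{Ric}(\overline{X},\overline{Y})$ to convert $\text{Ric}\circ\text{Ric}$ composites into $(\,\cdot\,)\circ\text{Ric}_o$ — and combine with the just-derived expression for $\stackrel{h}\nabla r$. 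When $r$ is non-constant, $\stackrel{h}\nabla r$ is a genuine (generically nonzero) 1-form, and matching it against the two expressions forces the stated identity $(A+(n-2)B)\circ\text{Ric}_o=\frac{r}{2}(A+2(n-2)B)$.

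The main obstacle I anticipate is purely bookkeeping: getting the Kulkarni--Nomizu contraction coefficients exactly right (the factors $n-2$, the $\tfrac{1}{2}$, and all signs), since these depend on the paper's curvature/Ricci sign conventions and on which pair of indices one contracts; a secondary subtlety is justifying in part~(b) that iterating $\stackrel{h}\nabla$ does not generate extra curvature-valued terms, which is precisely where ``horizontally integrable'' ($\widehat R=0$) must be invoked. Everything else is a direct consequence of metric compatibility $\stackrel{h}\nabla g=0$ together with linearity of the trace.
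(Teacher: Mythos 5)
Your strategy for parts (a) and (b) is essentially the paper's: contract the defining relation $\stackrel{h}\nabla\textbf{R}=A\otimes\textbf{R}+B\otimes(g\wedge\text{Ric})$ over a pair of arguments, use $\nabla g=0$ and the symmetry of $\text{Ric}$ to arrive at $\stackrel{h}\nabla\text{Ric}=(A+(n-2)B)\otimes\text{Ric}+rB\otimes g$, read off GRF$_n$ when $r\neq0$, and differentiate once more for the 2-recurrence when $r$ is a nonzero constant. (Your side remark about whether $A+(n-2)B$ could vanish is a fair one; the paper simply asserts its nonvanishing. Also note that horizontal integrability is already needed here, since the symmetry of $\text{Ric}$ for the Cartan connection is itself a consequence of $\widehat R=0$ via Lemma 2.4 of the cited reference --- it is not only a part-(b) issue as you suggest.)

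For part (c), however, there is a genuine gap. Your plan produces \emph{one} equation, the full $g$-trace $\stackrel{h}\nabla r=r(A+2(n-1)B)$, and then a second contraction of the Ricci-level identity over the derivative slot $\overline U$ and one Ricci argument, whose right-hand side is $(A+(n-2)B)(\text{Ric}_o\overline Z)+rB(\overline Z)$. But the left-hand side of that second contraction is the divergence $\sum_i(\nabla_{\beta e_i}\text{Ric})(e_i,\overline Z)$, which is a priori unrelated to $\stackrel{h}\nabla r$; you cannot ``combine with the just-derived expression for $\stackrel{h}\nabla r$'' without an independent identity linking the two. The missing ingredient is the twice-contracted second Bianchi identity
$\mathfrak{S}_{\overline{X},\overline{Y},\overline{Z}}\{(\nabla_{\beta\overline{X}}R)(\overline{Y},\overline{Z},\overline{W})\}=0$,
valid here precisely because the space is horizontally integrable, which upon contraction yields $(\text{div}\,\text{Ric})(\overline Z)=\tfrac12(\stackrel{h}\nabla r)(\overline Z)$. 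This is where the factor $\tfrac12$ in the target formula comes from and where the hypothesis $\widehat R=0$ is actually doing its work in part (c); without it the two traces give two independent pieces of information and the identity $(A+(n-2)B)\circ\text{Ric}_o=\tfrac r2(A+2(n-2)B)$ does not follow.
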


\begin{proof}~ \par

\vspace{5pt}

\noindent $\textbf{(a)}$  Let $(M,L)$ be a horizontally integrable hyper-generalized recurrent Finsler manifold
 of dimension $n\geq3$ with recurrence forms $A$ and $B$. Then, by Definition \ref{def.1a},
 \begin{eqnarray}
    (\nabla_{\beta \overline{U}} {\textbf{R}})(\overline{X},\overline{Y}, \overline{Z}, \overline{W}) &=&A(\overline{U}) {\textbf{R}}(\overline{X},\overline{Y}, \overline{Z}, \overline{W})+B(\overline{U})\{g(\overline{X},\overline{Z}) \textmd{\text{Ric}}(\overline{Y},\overline{W})\nonumber \\
   &&  +g(\overline{Y},\overline{W})  \textmd{\text{Ric}}(\overline{X},\overline{Z}) -g(\overline{X},\overline{W}) \textmd{\text{Ric}}(\overline{Y},\overline{Z})\nonumber \\
   &&     -g(\overline{Y},\overline{Z}) \textmd{\text{Ric}}(\overline{X},\overline{W})\}.\label{eq.b2}
 \end{eqnarray}
 Contracting both sides of the above equation with respect to $\overline{Y}$ and $\overline{W}$, noting that $\textmd{\text{Ric}}$ is symmetric (Lemma 2.4 of \cite{Types}), we get
 \begin{eqnarray}
    (\nabla_{\beta \overline{U}} \textmd{\text{Ric}})(\overline{X},\overline{Z}) &=&(A(\overline{U})+(n-2)B(\overline{U})) \textmd{\text{Ric}}(\overline{X}, \overline{Z})+r B(\overline{U})g(\overline{X},\overline{Z}) \nonumber\\
    &=&A_{1}(\overline{U}) \textmd{\text{Ric}}(\overline{X}, \overline{Z})+B_{1}(\overline{U})g(\overline{X},\overline{Z}),\label{eq.b3}
\end{eqnarray}
 where $A_{1}:=A+(n-2)B$ and $B_{1}:=rB$. Since $r, A$ and $B$ are nonzero, then
  $A_{1}$ and $B_{1}$ are nonzero. Hence, by Definition 2.2 of \cite{Types}, it follows that $(M,L)$ is  generalized Ricci recurrent.

\bigskip
 \noindent $\textbf{(b)}$ Follows from Equation (\ref{eq.b3}), taking into account that $r$ is nonzero constant.

 \bigskip
 \noindent $\textbf{(c)}$  Since $(M,L)$ is  horizontally integrable, then, using Lemma 2.4 of \cite{Types}, we have
\footnote{$\mathfrak{S}_{\overline{X},\overline{Y},\overline{Z}}$
denotes the cyclic sum over ${\overline{X},\overline{Y},\overline{Z}}$.}
\begin{eqnarray}
  \mathfrak{S}_{\overline{X},\overline{Y},\overline{Z}}\,\{({\nabla}_{\beta\overline{X}}R)(\overline{Y},\overline{Z}, \overline{W})\}&=&0\label{eq.b4}.
\end{eqnarray}
  Contracting both sides of Equation  (\ref{eq.b3}) with respect to $\overline{X}$ and $\overline{Z}$,  we get
  \begin{eqnarray}
    (\stackrel{h}{\nabla} r)(\overline{U}) &=&r(A(\overline{U})+(n-2)B(\overline{U}))+rn B(\overline{U}).\label{eq.b5}
\end{eqnarray}
 Again, contracting both sides of Equation (\ref{eq.b3}) with respect to $\overline{U}$ and $\overline{X}$, taking into account (\ref{eq.b4}) and the symmetry of $\textmd{\text{Ric}}$, we obtain
 \begin{eqnarray}
    \frac{1}{2}(\stackrel{h}{\nabla} r)(\overline{Z}) &=&r(A(\textmd{\text{Ric}}_{o}\overline{Z})+(n-2)B(\textmd{\text{Ric}}_{o}\overline{Z}))+r B(\overline{Z}).\label{eq.b6}
\end{eqnarray}
 Now, from  (\ref{eq.b5}) and (\ref{eq.b6}), we obtain
 $$(A +(n-2) B)\circ\textmd{\text{Ric}}_{o}=\frac{r}{2}(A+2(n-2)B).$$
 This completes the proof.
 \end{proof}

\begin{thm}\label{thm.4}  Let $(M,L)$ be a horizontally integrable hyper-generalized recurrent Finsler manifold  of dimension $n\geq3$ with
 recurrence forms $A$ and $B$. If $r$ is a nonzero constant, then we have\textmd{:}
\begin{description}
\item[(a)] the associated 1-forms $A$ and $B$ are related by $A+2(n-1)B=0$,

\item[(b)] $\displaystyle{\frac{r}{n}}$ is an eigenvalue of $\textmd{\text{Ric}}_{o}$ and $\overline{\sigma}$,  $\overline{\rho}$ are eigenvectors
 corresponding to $\displaystyle{\frac{r}{n}}$, where $\overline{\sigma}$ and $\overline{\rho}$ are defined respectively by\, $g(\overline{\sigma},\overline{X}):=A(\overline{X})$ and $g(\overline{\rho},\overline{X}):=B(\overline{X})$.
\end{description}
\end{thm}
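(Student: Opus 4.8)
The plan is to obtain both statements by feeding the condition $\stackrel{h}\nabla r = 0$ into the two contracted forms of the hyper-generalized recurrence equation already established in the proof of Theorem~\ref{thm.3}, namely the once-contracted Ricci identity (\ref{eq.b3}) and its two further contractions (\ref{eq.b5}) and (\ref{eq.b6}). For part (a), since $r$ is a nonzero constant we have $(\stackrel{h}\nabla r)(\overline{U}) = 0$ for every $\overline{U}$, so (\ref{eq.b5}) becomes $0 = r\big(A(\overline{U}) + (n-2)B(\overline{U})\big) + rnB(\overline{U})$; dividing by the nonzero factor $r$ and collecting terms leaves $A(\overline{U}) + (2n-2)B(\overline{U}) = 0$, i.e. $A + 2(n-1)B = 0$.

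For part (b), I would substitute $\stackrel{h}\nabla r = 0$ into (\ref{eq.b6}) and cancel the nonzero scalar $r$, producing a relation of the shape $(A + (n-2)B)\circ\text{Ric}_o = -rB$. Using part (a) in the form $A = -2(n-1)B$, the left-hand coefficient simplifies to $A + (n-2)B = -nB$, so the relation collapses to $B(\text{Ric}_o\overline{Z}) = \frac{r}{n}B(\overline{Z})$ for every $\overline{Z}$. To convert this $1$-form identity into the stated eigenvector property I would invoke the symmetry of $\text{Ric}$ (Lemma 2.4 of \cite{Types}), which makes $\text{Ric}_o$ self-adjoint with respect to $g$: writing $B(\overline{X}) = g(\overline{\rho},\overline{X})$, the identity reads $g\big(\text{Ric}_o\overline{\rho} - \frac{r}{n}\overline{\rho},\,\overline{Z}\big) = 0$ for all $\overline{Z}$, whence $\text{Ric}_o\overline{\rho} = \frac{r}{n}\overline{\rho}$ by non-degeneracy of $g$. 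Finally, part (a) forces $\overline{\sigma} = -2(n-1)\overline{\rho}$ for the metric duals, so $\overline{\sigma}$ is an eigenvector of $\text{Ric}_o$ for the same eigenvalue; both $\overline{\rho}$ and $\overline{\sigma}$ are genuinely nonzero because the recurrence forms are assumed nonzero.

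I do not anticipate a real obstacle here: the substantive work — the two contractions of the curvature recurrence and the use of the contracted Bianchi identity, which requires horizontal integrability (encoded in (\ref{eq.b4})) — has already been carried out inside the proof of Theorem~\ref{thm.3}, so what remains is bookkeeping. The one point demanding a little care is the combination of (\ref{eq.b6}) with part (a): one must track the scalar $r$ correctly (it enters through $B_1 = rB$ in (\ref{eq.b3})) so that the surviving eigenvalue comes out exactly as $\frac{r}{n}$, and one must bring in the symmetry of $\text{Ric}$ at the right step in order to pass from the $1$-form relation $B\circ\text{Ric}_o = \frac{r}{n}B$ to the actual eigenvector equation $\text{Ric}_o\overline{\rho} = \frac{r}{n}\overline{\rho}$.
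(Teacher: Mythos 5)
Your proposal is correct and follows essentially the same route as the paper: (a) is read off from (\ref{eq.b5}) with $\stackrel{h}{\nabla}r=0$, and (b) combines the identity of Theorem \ref{thm.3}(\textbf{c}) --- which you re-derive directly from (\ref{eq.b6}) --- with (a), then converts the resulting $1$-form relation into the eigenvector equation via the symmetry of $\text{Ric}$ and the nondegeneracy of $g$, exactly as the paper does. One cosmetic remark: your intermediate relation $(A+(n-2)B)\circ\text{Ric}_{o}=-rB$ is the one consistent with Theorem \ref{thm.3}(\textbf{c}), whereas literally cancelling $r$ in (\ref{eq.b6}) as printed would give $-B$, since the printed (\ref{eq.b6}) carries a spurious factor of $r$ on its first term.
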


\begin{proof}~ \par

\vspace{5pt}

\noindent $\textbf{(a)}$  Follows from Equation (\ref{eq.b5}), using the   assumption that $r$ is a nonzero constant.

\vspace{5pt}

\noindent $\textbf{(b)}$ From Theorem \ref{thm.3}(\textbf{c}) and the fact that $A+2(n-1)B=0$, we conclude that

\begin{equation}\label{eq.b7}
  A({\textmd{Ric}}_{o}\,\overline{X})=\frac{r}{n}A(\overline{X}),   \quad\, \, B(\textmd{\text{Ric}}_{o}\,\overline{X})=\frac{r}{n}B(\overline{X}) .
\end{equation}
 From which, using the symmetry of $\textmd{\text{Ric}}$ and the nondegeneracy of  $g$, we get
 $$\textmd{\text{Ric}}_{o}\,\overline{\sigma}=\frac{r}{n}\,\overline{\sigma},\quad  \, \, \textmd{\text{Ric}}_{o}\,\overline{\rho}=\frac{r}{n}\,\overline{\rho}  .$$
This proves the result.
 \end{proof}

\begin{thm} \label{thm.9} Let $(M,L)$ be a horizontally integrable  hyper-generalized recurrent Finsler manifold  of dimension $n\geq3$ with
 recurrence forms $A$ and $B$. If $r$ is a non-vanishing constant, then we have
  \begin{description}
  \item[(a)] $\mathfrak{S}_{\overline{X},\overline{Y},\overline{Z}} \,\{{(A \otimes {\textbf{R}}+B\otimes (g\wedge\textmd{\text{Ric}}))}(\overline{X},\overline{Y},\overline{Z},\overline{U},\overline{V})\}=0$
  \item[(b)]  $\stackrel{h}{\nabla} A$ and $\stackrel{h}{\nabla} B$  are  symmetric,
  \item[(c)]  $R(\overline{X},\overline{Y}){ \textbf{R}}=0$.
        \end{description}
  \end{thm}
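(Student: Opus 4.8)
\emph{Part (a)} requires essentially no computation. For the Cartan connection the $h$--curvature obeys the second Bianchi identity, which on a horizontally integrable manifold reduces to \eqref{eq.b4}; applying $g(\,\cdot\,,\overline{W})$ and using $\stackrel{h}{\nabla}g=0$ turns this into $\mathfrak{S}_{\overline{X},\overline{Y},\overline{Z}}\{(\stackrel{h}{\nabla}\textbf{R})(\overline{X},\overline{Y},\overline{Z},\overline{U},\overline{V})\}=0$. Substituting the defining relation $\stackrel{h}{\nabla}\textbf{R}=A\otimes\textbf{R}+B\otimes(g\wedge\text{Ric})$ of Definition \ref{def.1a} is precisely the assertion of (a).

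\emph{A key identity.} The engine for (b) and (c) is to differentiate the recurrence once more. Apply $\nabla_{\beta\overline{V}}$ to \eqref{eq.b2}, expand by the Leibniz rule, and re--substitute \eqref{eq.b2} for $\nabla_{\beta\overline{V}}\textbf{R}$, \eqref{eq.b3} for $\nabla_{\beta\overline{V}}\text{Ric}$, and $g\wedge g=2\textbf{G}$ (Remark \ref{rem.1}); this writes $\nabla^{2}_{\overline{V},\overline{U}}\textbf{R}$ as a combination of $\textbf{R}$, $g\wedge\text{Ric}$ and $\textbf{G}$ with scalar coefficients built from $A$, $B$, $\stackrel{h}{\nabla}A$, $\stackrel{h}{\nabla}B$ and $r$. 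Now alternate in $\overline{U}\leftrightarrow\overline{V}$. On the left, horizontal integrability ($\widehat{R}=0$) makes $[\beta\overline{U},\beta\overline{V}]$ horizontal, so the $hv$--curvature terms in the Ricci commutation identity drop out and the left side collapses to $R(\overline{V},\overline{U})\cdot\textbf{R}$, the curvature operator acting as a derivation on $\textbf{R}$. On the right, the $A\wedge A$ term and the $A\wedge B$ cross terms cancel, and---crucially---the coefficient of $\textbf{G}$, being proportional to $B(\overline{U})B(\overline{V})-B(\overline{V})B(\overline{U})$, vanishes. The outcome is
\begin{equation}\label{eq.star}
R(\overline{V},\overline{U})\cdot\textbf{R}=\widehat{A}(\overline{V},\overline{U})\,\textbf{R}+\widehat{B}(\overline{V},\overline{U})\,(g\wedge\text{Ric}),
\end{equation}
where $\widehat{A}(\overline{V},\overline{U}):=(\stackrel{h}{\nabla}A)(\overline{V},\overline{U})-(\stackrel{h}{\nabla}A)(\overline{U},\overline{V})$ and similarly for $\widehat{B}$. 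Contracting \eqref{eq.star} over a pair of slots, using the trace of $g\wedge\text{Ric}$ from the derivation of \eqref{eq.b3} together with Theorem \ref{thm.4}(a), gives the companion $R(\overline{V},\overline{U})\cdot\text{Ric}=\widehat{B}(\overline{V},\overline{U})\,(r\,g-n\,\text{Ric})$.

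\emph{Part (b)}---the crux. By Theorem \ref{thm.4}(a), $A+2(n-1)B=0$, hence $\stackrel{h}{\nabla}A=-2(n-1)\stackrel{h}{\nabla}B$, so it suffices to show $\widehat{B}=0$. The ``obvious'' move---tracing \eqref{eq.star} down to the Ricci level and then over the Ricci slots---collapses to a tautology once Theorem \ref{thm.4}(a) is inserted, so one must extract \emph{two} independent scalar identities. Since $R(\overline{V},\overline{U})$ is a metric derivation, $\langle R(\overline{V},\overline{U})\cdot T,T\rangle=\tfrac12\,R(\overline{V},\overline{U})\cdot\norm{T}^{2}=0$ for every $\pi$--tensor $T$; pairing \eqref{eq.star} with $\textbf{R}$ and the companion with $\text{Ric}$, and using the elementary identities $\langle g\wedge\text{Ric},\textbf{R}\rangle=4\norm{\text{Ric}}^{2}$ and $\langle g,\text{Ric}\rangle=r$, yields
$$\widehat{B}\bigl((n-1)\norm{\textbf{R}}^{2}-2\norm{\text{Ric}}^{2}\bigr)=0,\qquad \widehat{B}\bigl(r^{2}-n\norm{\text{Ric}}^{2}\bigr)=0.$$
If both scalar factors vanished at a point, then $(n-1)\norm{\textbf{R}}^{2}=2\norm{\text{Ric}}^{2}=\tfrac{2r^{2}}{n}$; but $\langle\textbf{R},\textbf{G}\rangle=2r$ and $\norm{\textbf{G}}^{2}=2n(n-1)$ force (by orthogonal projection onto $\textbf{G}$) $\norm{\textbf{R}}^{2}\ge\tfrac{2r^{2}}{n(n-1)}$ with equality only for $\textbf{R}=\tfrac{r}{n(n-1)}\textbf{G}$, and for that $\textbf{R}$ one has $\text{Ric}=\tfrac{r}{n}g$ and $\stackrel{h}{\nabla}\textbf{R}=\tfrac{r}{n(n-1)}\bigl(A+2(n-1)B\bigr)\otimes\textbf{G}=0$. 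So, the recurrence being proper ($\stackrel{h}{\nabla}\textbf{R}\neq0$), the two factors cannot vanish simultaneously, forcing $\widehat{B}=0$; hence $\stackrel{h}{\nabla}B$, and with it $\stackrel{h}{\nabla}A$, is symmetric. This is the step I expect to be the real obstacle---extracting genuine information from the a priori uninformative contraction of \eqref{eq.star} and disposing of the constant--curvature degeneracy---while (a), the derivation of \eqref{eq.star}, and (c) are mechanical once horizontal integrability has been used to kill the $hv$--curvature terms.

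\emph{Part (c)} is then immediate: with $\stackrel{h}{\nabla}A$ and $\stackrel{h}{\nabla}B$ symmetric we have $\widehat{A}=\widehat{B}=0$, and \eqref{eq.star} reduces to $R(\overline{X},\overline{Y})\cdot\textbf{R}=0$.
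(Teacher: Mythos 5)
Your proposal is correct in outline and coincides with the paper's proof for part (a) (the second Bianchi identity (\ref{eq.b4}) combined with the defining recurrence), for the derivation of the key commutation identity (your displayed formula for $R(\overline{U},\overline{V})\textbf{R}$ is the paper's (\ref{eq.16a})), and for part (c). Where you genuinely diverge is the crux of part (b), the proof that $\overline{d}A=\overline{d}B=0$. The paper substitutes $\overline{d}B=-\overline{d}A/(2(n-1))$ to rewrite (\ref{eq.16a}) as (\ref{eq.17a}), namely $R(\overline{U},\overline{V})\textbf{R}=-(\overline{d}A)(\overline{U},\overline{V})\,\mathcal{H}$ with $\mathcal{H}=\textbf{R}-\tfrac{1}{2(n-1)}(g\wedge \text{Ric})$, feeds this back into the second Bianchi identity to get the vanishing of the cyclic sum of $(\overline{d}A)(\overline{U},\overline{V})\mathcal{H}(\overline{W},\overline{X},\overline{Y},\overline{Z})$ over the three pairs of arguments, and then uses the pair symmetry of $\mathcal{H}$ (a Walker-type algebraic lemma) to conclude $\overline{d}A=0$. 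You instead exploit that $R(\overline{U},\overline{V})$ is a metric derivation, so its action on any $\pi$-tensor is orthogonal to that tensor, and extract the two scalar identities $\widehat{B}\bigl((n-1)\norm{\textbf{R}}^{2}-2\norm{\text{Ric}}^{2}\bigr)=0$ and $\widehat{B}\bigl(r^{2}-n\norm{\text{Ric}}^{2}\bigr)=0$, disposing of the simultaneous vanishing of the two factors by a Cauchy--Schwarz comparison with $\textbf{G}$. Both routes are sound and both rely on the full algebraic symmetries of $\textbf{R}$ supplied by horizontal integrability; yours is more computational but bypasses the Walker lemma, while the paper's is shorter once that lemma is granted. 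It is worth noting that the two arguments stumble on exactly the same degenerate case: the Walker-type step only yields $\overline{d}A\otimes\mathcal{H}=0$, so the paper tacitly assumes $\mathcal{H}\neq 0$, and $\mathcal{H}=0$ is precisely your Cauchy--Schwarz equality case $\textbf{R}=\tfrac{r}{n(n-1)}\textbf{G}$. Your appeal to ``the recurrence being proper'' ($\stackrel{h}{\nabla}\textbf{R}\neq 0$) to exclude it is an extra hypothesis not contained in Definition \ref{def.1a}, but the paper is no more careful at this point, so this is a shared (and minor) gap rather than a defect specific to your argument.
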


\begin{proof}~ \par

\vspace{5pt}
\noindent $\textbf{(a)}$ Follows from  Definition \ref{def.1a} and  Equation (\ref{eq.b4}).

\vspace{5pt}

\noindent $\textbf{(b)}$ By Definition \ref{def.1a}, we have
\begin{equation}\label{t1}
  \stackrel{h}\nabla\textmd{\text{Ric}}=(A+(n-2)B)\otimes \textmd{\text{Ric}}+ r B \otimes g.
\end{equation}
Again, from the same definition, taking (\ref{t1}) into account, one can show that

\begin{eqnarray*}
  \stackrel{h}\nabla \stackrel{h}\nabla {\textbf{R}}&=&(\stackrel{h}\nabla A+A\otimes A)\otimes {\textbf{R}}+2rB\otimes B \otimes \textbf{G} \\
  &&+( A\otimes B+ \stackrel{h}\nabla B+B\otimes A+(n-2)B\otimes B)\otimes (g\wedge \textmd{\text{Ric}}).
\end{eqnarray*}
From which, using Lemma 2.4(\textbf{g}) of \cite{Types}, we obtain
\begin{eqnarray}
   {R}(\overline{U},\overline{V}){\textbf{R}}&=&-({\overline{d}}A)(\overline{U},\overline{V}) {\textbf{R}}-
   ({\overline{d}}B)(\overline{U},\overline{V})(g\wedge \textmd{\text{Ric}}),\label{eq.16a}
\end{eqnarray}
where $({\overline{d}}A)(\overline{U},\overline{V}):=(\stackrel{h}\nabla {A})(\overline{U},\overline{V})-
(\stackrel{h}\nabla {A})(\overline{V},\overline{U})$.

\par

On the other hand, from Theorem \ref{thm.3}\textbf{(a)}, we conclude that
\begin{equation}\label{tt1}
  {\overline{d}}B=-\frac{{\overline{d}}A}{2(n-1)}.
\end{equation}
Hence, (\ref{eq.16a}) and  (\ref{tt1}) yield
\begin{eqnarray}
   {R}(\overline{U},\overline{V}) {\textbf{R}}&=&-({\overline{d}}A)(\overline{U},\overline{V})\mathcal{H}, \label{eq.17a}
\end{eqnarray}
where  $\,\mathcal{H}:=\{ {\textbf{R}}-\frac{1}{2(n-1)}(g\wedge \textmd{\text{Ric}})\}$. \\
 From Equation (\ref{eq.17a}) and  Lemma 2.4(\textbf{g}) of \cite{Types}, we get \footnote{$\mathfrak{S}_{\overline{U},\overline{V};\,\,\overline{W},\overline{X};\,\, \overline{Y},\overline{Z}}$
denotes the cyclic sum over the three pairs of arguments
$\overline{U},\overline{V};\,\, \overline{W},\overline{X};\,\,\overline{Y},\overline{Z}$.}
 \begin{eqnarray*}
  \mathfrak{S}_{\overline{U},\overline{V};\,\,\overline{W},\overline{X};\,\,\overline{Y},\overline{Z}}
  \set{({\overline{d}}A)(\overline{U},\overline{V})\mathcal{H}(\overline{W},\overline{X},\overline{Y},\overline{Z})}&=&0 .
\end{eqnarray*}
This, and the fact that $\mathcal{H}(\overline{X},\overline{Y},\overline{Z},\overline{W})=\mathcal{H}(\overline{Z},\overline{W},\overline{X},\overline{Y})$, imply that
\begin{equation}\label{da}
   {\overline{d}}A=0.
\end{equation}
Hence, by Equation (\ref{tt1}), ${\overline{d}}B=0$.

\vspace{5pt}
\noindent $\textbf{(c)}$ Follows from   (\ref{eq.17a}) and (\ref{da}).
\end{proof}

\begin{thm}\label{thm.5}  Let $(M,L)$ be a horizontally integrable hyper-generalized recurrent Finsler manifold
 of dimension $n\geq3$ with recurrence forms $A$ and $B$. If $r=0$, then the following relations hold:
\begin{description}
\item[(a)]$A\circ\textmd{\text{Ric}}_{o} =0, \, B\circ\textmd{\text{Ric}}_{o}=0$,

\item[(b)] $A(R(\overline{X},\overline{Y})\overline{\rho})=0$,
\item[(c)] $\mathfrak{S}_{\overline{X},\overline{Y},\overline{Z}}\{A(\overline{X})B(R(\overline{Y},\overline{Z})\overline{W})\}=0$.
\end{description}
\end{thm}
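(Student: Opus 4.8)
Write $A_{1}:=A+(n-2)B$ and let $\overline{\alpha}:=\overline{\sigma}+(n-2)\overline{\rho}$ be its $g$-dual $\pi$-vector field, where $\overline{\sigma},\overline{\rho}$ are defined by $g(\overline{\sigma},\overline{X})=A(\overline{X})$, $g(\overline{\rho},\overline{X})=B(\overline{X})$; recall that the Finsler metric $g$ is fibrewise positive definite, so $\text{Ric}_{o}$ is a $g$-symmetric endomorphism of $\pi^{-1}(TM)$. First I would record the consequences of $r=0$. Contracting the identity of Definition~\ref{def.1a} over $\overline{Y},\overline{W}$ and putting $r=0$ in (\ref{eq.b3}) gives $\stackrel{h}\nabla\text{Ric}=A_{1}\otimes\text{Ric}$, so $\text{Ric}$ is recurrent. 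Since $(M,L)$ is horizontally integrable, the second Bianchi identity (\ref{eq.b4}) holds, hence so does the cyclic identity $\mathfrak{S}_{\overline{X},\overline{Y},\overline{Z}}\{A(\overline{X})\textbf{R}(\overline{Y},\overline{Z},\overline{U},\overline{V})+B(\overline{X})(g\wedge\text{Ric})(\overline{Y},\overline{Z},\overline{U},\overline{V})\}=0$ (this is Theorem~\ref{thm.9}(\textbf{a}), whose proof uses only Definition~\ref{def.1a} and (\ref{eq.b4}), not the constancy of $r$). Contracting this identity, using $r=0$ to evaluate the resulting traces of $g\wedge\text{Ric}$, and using the curvature symmetries, yields on the one hand $A_{1}\circ\text{Ric}_{o}=0$ (equivalently $\text{Ric}_{o}\,\overline{\alpha}=0$) and on the other hand a structural identity
\[
\begin{aligned}
\textbf{R}(\overline{\sigma},\overline{Y},\overline{Z},\overline{W})
&=P(\overline{Z})\,\text{Ric}(\overline{Y},\overline{W})-P(\overline{W})\,\text{Ric}(\overline{Y},\overline{Z})\\
&\quad -g(\overline{Y},\overline{W})\,(B\circ\text{Ric}_{o})(\overline{Z})+g(\overline{Y},\overline{Z})\,(B\circ\text{Ric}_{o})(\overline{W}),
\end{aligned}
\]
with $P:=A+(n-3)B$.

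For part \textbf{(a)} I may assume $\text{Ric}\neq0$ (otherwise the claim is trivial). Setting $\overline{Y}=\overline{\sigma}$ in the structural identity, the left side vanishes by skew-symmetry of $\textbf{R}$ in its first two arguments, and since $\text{Ric}(\overline{\sigma},\overline{X})=(A\circ\text{Ric}_{o})(\overline{X})$ and $g(\overline{\sigma},\overline{X})=A(\overline{X})$, I obtain a bilinear relation in $A,B,A\circ\text{Ric}_{o},B\circ\text{Ric}_{o}$. Substituting $B\circ\text{Ric}_{o}=-\frac{1}{n-2}\,A\circ\text{Ric}_{o}$ (from $A_{1}\circ\text{Ric}_{o}=0$) collapses this, for $n\geq4$, to $A_{1}\wedge(A\circ\text{Ric}_{o})=0$; hence $A\circ\text{Ric}_{o}=\mu A_{1}$ for a scalar function $\mu$, i.e. $\text{Ric}_{o}\,\overline{\sigma}=\mu\,\overline{\alpha}$. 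Pairing with $\overline{\alpha}$ and using $\text{Ric}_{o}\,\overline{\alpha}=0$ together with the symmetry of $\text{Ric}_{o}$ gives $\mu\,g(\overline{\alpha},\overline{\alpha})=0$; positive-definiteness of $g$ forces $\mu=0$, whence $\text{Ric}_{o}\,\overline{\sigma}=0$ and $\text{Ric}_{o}\,\overline{\rho}=0$, i.e. $A\circ\text{Ric}_{o}=0=B\circ\text{Ric}_{o}$. (The cases $n=3$ and $A_{1}\equiv0$ need short separate arguments; for $n=3$, using $\textbf{R}=g\wedge\text{Ric}$ and the Ricci identity applied to $\text{Ric}$ one shows $\text{Ric}_{o}^{2}$ is a multiple of the identity, which with $r=0$ and $\dim M$ odd forces $\text{Ric}=0$.) I expect this separation of $A\circ\text{Ric}_{o}$ and $B\circ\text{Ric}_{o}$ to be the crux of the proof: every metric contraction of the recurrence or of the Bianchi identity returns only their combination $A_{1}\circ\text{Ric}_{o}$, so one has to exploit a genuine algebraic symmetry of the curvature together with the positive-definiteness of $g$ in order to split them.

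Parts \textbf{(b)} and \textbf{(c)} then follow quickly from (a). For (b), set $\overline{Y}=\overline{\rho}$ in the structural identity: by (a) the terms $\text{Ric}(\overline{\rho},\cdot)$ and $B\circ\text{Ric}_{o}$ all vanish, so $\textbf{R}(\overline{\sigma},\overline{\rho},\overline{Z},\overline{W})=0$; by the pair- and skew-symmetries of $\textbf{R}$ this gives $A(R(\overline{X},\overline{Y})\overline{\rho})=g(R(\overline{X},\overline{Y})\overline{\rho},\overline{\sigma})=\textbf{R}(\overline{X},\overline{Y},\overline{\rho},\overline{\sigma})=0$. For (c), put $\overline{V}=\overline{\rho}$ in the cyclic identity above (with $\overline{U}$ renamed $\overline{W}$): by (a) the factor $(g\wedge\text{Ric})(\overline{Y},\overline{Z},\overline{W},\overline{\rho})$ reduces to $B(\overline{Z})\text{Ric}(\overline{Y},\overline{W})-B(\overline{Y})\text{Ric}(\overline{Z},\overline{W})$, whose cyclic sum against $B(\overline{X})$ is identically zero (the products $B(\overline{X})B(\overline{Z})$ cancel pairwise in the cyclic sum). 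What remains is $\mathfrak{S}_{\overline{X},\overline{Y},\overline{Z}}\{A(\overline{X})\,\textbf{R}(\overline{Y},\overline{Z},\overline{W},\overline{\rho})\}=0$, i.e. $\mathfrak{S}_{\overline{X},\overline{Y},\overline{Z}}\{A(\overline{X})\,B(R(\overline{Y},\overline{Z})\overline{W})\}=0$, which is (c).
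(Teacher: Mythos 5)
Your parts \textbf{(b)} and \textbf{(c)} follow essentially the same route as the paper: both rest on the cyclic identity obtained from (\ref{eq.b4}) (the paper's Equation (\ref{eq.b8a})), its contraction over one pair of arguments, and the substitution $\overline{U}=\overline{\rho}$ together with part \textbf{(a)}; your cancellation of the $B(\overline{X})B(\overline{Y})$ terms in the cyclic sum for \textbf{(c)} is exactly what the paper leaves implicit. The real divergence is in part \textbf{(a)}, and here your diagnosis is the more honest one. The paper disposes of \textbf{(a)} in one line by citing Equation (\ref{eq.b7}) ``since $r=0$''; but (\ref{eq.b7}) is established in Theorem \ref{thm.4}\textbf{(b)} only under the hypothesis that $r$ is a \emph{nonzero constant} (its derivation uses $A+2(n-1)B=0$, which comes from (\ref{eq.b5}) after dividing by $r$). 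When $r=0$ the contracted Bianchi identity yields only the combination $(A+(n-2)B)\circ\text{Ric}_{o}=0$, exactly as you observe, and splitting this into $A\circ\text{Ric}_{o}=0$ and $B\circ\text{Ric}_{o}=0$ requires an additional argument that the paper does not supply. Your structural identity for $\textbf{R}(\overline{\sigma},\overline{Y},\overline{Z},\overline{W})$ is correct (it is precisely the $r=0$ contraction of (\ref{eq.b8a}) before discarding the $B\circ\text{Ric}_{o}$ terms, rewritten via the pair symmetry), and the subsequent computation $P-\frac{1}{n-2}A=\frac{n-3}{n-2}(A+(n-2)B)$, the conclusion $A\circ\text{Ric}_{o}=\mu\,(A+(n-2)B)$, and the elimination of $\mu$ by pairing with $\overline{\alpha}$ and invoking $\text{Ric}_{o}\overline{\alpha}=0$ plus fibrewise positive-definiteness of $g$ all check out. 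What you buy is a proof of \textbf{(a)} that actually works; what you pay is the use of positive-definiteness, an ingredient nowhere invoked in the paper.

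Two caveats. First, the exceptional cases you flag are genuinely open in your outline: when $n=3$ the factor $\frac{n-3}{n-2}$ kills your key relation, and your sketch for that case (that $\text{Ric}_{o}^{2}$ is a multiple of the identity) is asserted rather than derived; likewise the case $A+(n-2)B\equiv 0$ yields a tautology and is not handled. These would have to be completed for the proof to stand on its own, and the paper gives no help there. Second, be aware that once \textbf{(a)} is granted, your detour through the structural identity for \textbf{(b)} is unnecessary: setting $\overline{U}=\overline{\rho}$ directly in the once-contracted identity, as the paper does, gives $A(R(\overline{X},\overline{Y})\overline{\rho})=0$ immediately. So the fair summary is: \textbf{(b)} and \textbf{(c)} agree with the paper; for \textbf{(a)} you have correctly identified and largely repaired a gap in the paper's own argument, modulo the two special cases you yourself flag.
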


\begin{proof}~ \par
\vspace{5pt}

\noindent $\textbf{(a)}$ Follows from Equation (\ref{eq.b7}) since $r=0$.

\vspace{5pt}

\noindent $\textbf{(b)}$  Using Equation (\ref{eq.b4}), we have
  \begin{eqnarray}
   0 &=&\mathfrak{S}_{\overline{X},\overline{Y},\overline{Z}}\,\{A(\overline{X}) {\textbf{R}}(\overline{Y},\overline{Z}, \overline{U}, \overline{V})\}+\mathfrak{S}_{\overline{X},\overline{Y},\overline{Z}}\,\{B(\overline{X})\{g(\overline{Y},\overline{U}) \textmd{\text{Ric}}(\overline{Z},\overline{V})\nonumber \\
   &&  +g(\overline{Z},\overline{V})  \textmd{\text{Ric}}(\overline{Y},\overline{U})
   -g(\overline{Y},\overline{V}) \textmd{\text{Ric}}(\overline{Z},\overline{U}) -g(\overline{Z},\overline{U}) \textmd{\text{Ric}}(\overline{Y},\overline{V})\}\}.\label{eq.b8a}
 \end{eqnarray}
Contracting both sides of  (\ref{eq.b8a}) with respect to $\overline{Z}$ and $\overline{V}$ and using \textbf{(a)} above,  we obtain
\begin{eqnarray*}
   &&\{A(\overline{X})+(n-2)B(\overline{X})\} \textmd{\text{Ric}}(\overline{Y},\overline{U})+rB(\overline{X})g(\overline{Y},\overline{U})\nonumber \\
   &&-\{A(\overline{Y})+(n-2)B(\overline{Y})\} \textmd{\text{Ric}}(\overline{X},\overline{U})+rB(\overline{Y})g(\overline{X},\overline{U})\nonumber \\ && + A(R(\overline{X},\overline{Y})\overline{U})+B(\overline{Y})\textmd{\text{Ric}}(\overline{X},\overline{U})-
   B(\overline{X})\textmd{\text{Ric}}(\overline{Y},\overline{U})=0.\label{eq.b8}
 \end{eqnarray*}
From which, by setting $\overline{U}=\overline{\rho}$ and noting that $B(\overline{X}):=g(\overline{X},\overline{\rho})$ and
$B\circ\textmd{\text{Ric}}_{o}=0$, we conclude that
$$A(R(\overline{X},\overline{Y})\overline{\rho})=0.$$

\vspace{5pt}

\noindent $\textbf{(c)}$ Follows from Equation (\ref{eq.b8a}) by setting $\overline{U}=\overline{\rho}$ and taking into account the fact that
$B\circ\textmd{\text{Ric}}_{o}=0$.
\end{proof}

\section{Conharmonic recurrence}

In this section, we investigate  two types of  Finsler recurrence, namely the conharmonic and generalized conharmonic recurrences. Some relations between such recurrences and other Finsler recurrences are obtained.

\smallskip

\begin{defn}\label{def.r1}
Let $(M,L)$ be Finsler manifold  of dimension $n\geq3$ with nonzero $h$-curvature tensor
 ${R}$. The
$\pi$-tensor field $ {\mathbb{C}}$
 defined by
 \begin{eqnarray}
 \mathbb{C}:=\textbf{R}-\frac{1}{(n-2)}(g\wedge \textmd{{Ric}}) \label{eq.bt2}
 \end{eqnarray}
will be called the conharmonic curvature tensor, ${g\wedge \textmd{{Ric}}}$ being the Kulkarni-Nomizu product of  $g$ and  $\emph{\text{Ric}}$ defined by (\ref{eq.b1}).
\par If the conharmonic curvature tensor ${\mathbb{C}}$ vanishes, then $(M,L)$ is
said to be conharmonically flat.
\end{defn}

It should be noted that the conharmonic curvature tensor in \emph{Riemannian geometry} has been thoroughly investigated by many authors, see for example
 \cite{R3, R1}.
 The above definition is the Finsler version  of such tensor.

\begin{defn}\label{def.2a} Let $(M,L)$ be a Finsler manifold  of dimension $n\geq3$ with nonzero $h$-curvature tensor
 ${R}$. Then, $(M,L)$ is said to be:
 \begin{description}
   \item[(a)] conharmonically recurrent Finsler manifold $({\overline{C}}F_{n})$ if $$\stackrel{h}{\nabla} {\mathbb{C}} = A\otimes {\mathbb{C}},$$
      \item[(b)] generalized conharmonically recurrent Finsler manifold $(G{\overline{C}}F_{n})$ if \,\,$$\stackrel{h}{\nabla} {\mathbb{C}}= A\otimes {\mathbb{C}}+B\otimes G,$$
   \end{description}
   where $A$ and $B$ are nonzero scalar 1-forms  on $TM$,
 positively homogenous of degree zero in the directional argument, called the recurrence forms.

In particular, if \,$\stackrel{h}{\nabla} {\mathbb{C}}=0$, then $(M,L)$ is
called conharmonically symmetric.
\end{defn}

\begin{thm}\label{thm.10}  Let $(M,L)$ be a horizontally integrable hyper-generalized recurrent Finsler manifold
 of dimension $n\geq3$ with recurrence forms $A$ and $B$. Then, we have\,:\vspace{-0.2cm}
\begin{description}
\item[(a)] if $r\neq0$, then $(M,L)$ is generalized  conharmonically  recurrent $({G{\overline{C}}F}_{n})$.
\item[(b)] if $r=0$, then $(M,L)$ is  conharmonically  recurrent $({{\overline{C}}F}_{n})$.
\end{description}
\end{thm}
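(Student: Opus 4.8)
The plan is to start from the defining relation of a hyper-generalized recurrent Finsler manifold, namely $\stackrel{h}\nabla\mathbf{R}=A\otimes\mathbf{R}+B\otimes(g\wedge\text{Ric})$, and from the Ricci-level identity \eqref{t1}, which reads $\stackrel{h}\nabla\text{Ric}=(A+(n-2)B)\otimes\text{Ric}+rB\otimes g$. The key observation is that the conharmonic curvature tensor is $\mathbb{C}=\mathbf{R}-\frac{1}{n-2}(g\wedge\text{Ric})$, so I want to compute $\stackrel{h}\nabla\mathbb{C}$ directly. Since $\stackrel{h}\nabla g=0$ (metricity of the Cartan connection, Theorem~\ref{th.1}(a)), the Kulkarni--Nomizu product behaves well under $h$-covariant differentiation: $\stackrel{h}\nabla(g\wedge\text{Ric})=g\wedge(\stackrel{h}\nabla\text{Ric})$. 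I would then substitute \eqref{t1} into this, using bilinearity of $\wedge$ in its second slot, to express $\stackrel{h}\nabla(g\wedge\text{Ric})$ in terms of $g\wedge\text{Ric}$ and $g\wedge g$; by Remark~\ref{rem.1}, $g\wedge g=2\mathbf{G}$.

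Carrying this out: $\stackrel{h}\nabla(g\wedge\text{Ric}) = (A+(n-2)B)\otimes(g\wedge\text{Ric}) + rB\otimes(g\wedge g) = (A+(n-2)B)\otimes(g\wedge\text{Ric}) + 2rB\otimes\mathbf{G}$. Then
\[
\stackrel{h}\nabla\mathbb{C} = A\otimes\mathbf{R} + B\otimes(g\wedge\text{Ric}) - \frac{1}{n-2}\Big((A+(n-2)B)\otimes(g\wedge\text{Ric}) + 2rB\otimes\mathbf{G}\Big).
\]
Collecting the $g\wedge\text{Ric}$ terms, the coefficient is $B - \frac{A+(n-2)B}{n-2} = -\frac{A}{n-2}$, so the bracket $B\otimes(g\wedge\text{Ric}) - \frac{1}{n-2}(A+(n-2)B)\otimes(g\wedge\text{Ric}) = -\frac{1}{n-2}A\otimes(g\wedge\text{Ric})$. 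Hence $\stackrel{h}\nabla\mathbb{C} = A\otimes\big(\mathbf{R}-\frac{1}{n-2}(g\wedge\text{Ric})\big) - \frac{2r}{n-2}B\otimes\mathbf{G} = A\otimes\mathbb{C} - \frac{2r}{n-2}B\otimes\mathbf{G}$. Now I would simply split into cases. For (a), when $r\neq0$, set $B':=-\frac{2r}{n-2}B$; since $r$ and $B$ are nonzero, $B'$ is a nonzero $1$-form, and $\stackrel{h}\nabla\mathbb{C}=A\otimes\mathbb{C}+B'\otimes\mathbf{G}$, which is exactly the definition of a generalized conharmonically recurrent manifold (Definition~\ref{def.2a}(b)), noting $G$ there corresponds to $\mathbf{G}=g(G(\cdot,\cdot)\cdot,\cdot)$. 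For (b), when $r=0$, the $\mathbf{G}$-term drops out and $\stackrel{h}\nabla\mathbb{C}=A\otimes\mathbb{C}$, which is Definition~\ref{def.2a}(a).

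I do not anticipate a serious obstacle here; this is essentially an algebraic manipulation once one recognizes that metricity lets $\stackrel{h}\nabla$ pass through $g$ in the Kulkarni--Nomizu product. The one point requiring mild care is the bookkeeping of the coefficient of $g\wedge\text{Ric}$ — confirming that the $B$-terms cancel against the $\frac{1}{n-2}(A+(n-2)B)$ contribution to leave exactly $-\frac{1}{n-2}A$ — and the homogeneity bookkeeping: the recurrence forms $A,B$ in Definition~\ref{def.2a} are required to be positively homogeneous of degree zero, so I would remark that this holds for the $HGF_n$ forms (or is inherited), and that $B'=-\frac{2r}{n-2}B$ is then also homogeneous of degree zero since $r$ is homogeneous of degree zero in the directional argument. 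Whether the horizontal-integrability hypothesis is actually used in this particular computation is worth noting — it is not needed for the identity itself, only (implicitly) for the symmetry of $\text{Ric}$ that underlies \eqref{t1}; I would just invoke \eqref{t1} as already established.
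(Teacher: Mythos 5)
Your computation is correct and is exactly the argument the paper intends: its proof of this theorem is a one-line appeal to Definition~\ref{def.r1} together with Equations~(\ref{eq.b2}) and~(\ref{eq.b3}), which is precisely the substitution of $\stackrel{h}\nabla\mathbf{R}$ and $\stackrel{h}\nabla\text{Ric}$ into $\stackrel{h}\nabla\mathbb{C}$ that you carry out explicitly, arriving at $\stackrel{h}\nabla\mathbb{C}=A\otimes\mathbb{C}-\frac{2r}{n-2}B\otimes\mathbf{G}$. Your side remarks (that horizontal integrability enters only through the symmetry of $\text{Ric}$ used to derive~(\ref{t1}), and the homogeneity bookkeeping for $B'$) are accurate and, if anything, more careful than the paper's own treatment.
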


\begin{proof} The proof follows from Definition \ref{def.r1}, taking into  account Equations (\ref{eq.b2}) and (\ref{eq.b3}).
\end{proof}

 \begin{thm}\label{thm.11}  Let $(M,L)$ be a Finsler manifold  of dimension $n\geq3$ with nonzero $h$-curvature tensor
 ${R}$. If $(M,L)$ satisfies $\emph{\text{Ric}}=\frac{r(n-2)}{2n(n-1)} \, g$, then, we have:\vspace{-0.2cm}
\begin{description}
\item[(a)] $(M,L)$ is concircularly recurrent if and only if it is conharmonically recurrent.

\item[(b)] $(M,L)$ is generalized concircularly recurrent if and only if it is generalized conharmonically recurrent.

 \end{description}
\end{thm}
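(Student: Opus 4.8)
The key observation is that the hypothesis $\text{Ric}=\frac{r(n-2)}{2n(n-1)}\,g$ forces the Kulkarni-Nomizu term $g\wedge\text{Ric}$ to be proportional to $\mathbf{G}$, so that the conharmonic curvature tensor $\mathbb{C}$ and the concircular curvature tensor $C$ differ only by a multiple of $\mathbf{G}$ (equivalently, of $g\wedge g$, by Remark~\ref{rem.1}). First I would substitute the hypothesis into \eqref{eq.b1} to get $(g\wedge\text{Ric})(\overline{X},\overline{Y},\overline{Z},\overline{W})=\frac{r(n-2)}{2n(n-1)}(g\wedge g)(\overline{X},\overline{Y},\overline{Z},\overline{W})=\frac{r(n-2)}{n(n-1)}\mathbf{G}(\overline{X},\overline{Y},\overline{Z},\overline{W})$, using $g\wedge g=2\mathbf{G}$. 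Plugging this into \eqref{eq.bt2} gives $\mathbb{C}=\textbf{R}-\frac{r}{n(n-1)}\mathbf{G}$, which is exactly the $(0,4)$-form $\textbf{C}$ of the concircular curvature tensor $C=R-\frac{r}{n(n-1)}G$. So under the hypothesis, $\mathbb{C}=\textbf{C}$ as $\pi$-tensor fields.

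Granting this identification, part~(a) is immediate: $(M,L)$ is concircularly recurrent means $\stackrel{h}\nabla\textbf{C}=A\otimes\textbf{C}$ (in the $(0,4)$ formulation, matching Definition~\ref{def.2a}(a) for $\mathbb{C}$), and since $\mathbb{C}=\textbf{C}$ the two recurrence conditions $\stackrel{h}\nabla\textbf{C}=A\otimes\textbf{C}$ and $\stackrel{h}\nabla\mathbb{C}=A\otimes\mathbb{C}$ are literally the same equation. Similarly part~(b): generalized concircular recurrence $\stackrel{h}\nabla\textbf{C}=A\otimes\textbf{C}+B\otimes\textbf{G}$ coincides term-by-term with generalized conharmonic recurrence $\stackrel{h}\nabla\mathbb{C}=A\otimes\mathbb{C}+B\otimes G$ once $\mathbb{C}$ is replaced by $\textbf{C}$. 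In both cases the equivalence is a biconditional because the substitution $\mathbb{C}\leftrightarrow\textbf{C}$ is an equality, not merely an implication, so no side conditions are lost in either direction.

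The only point requiring a little care—and the main (minor) obstacle—is bookkeeping about whether the "concircular recurrence" referred to here is stated for the $(1,3)$-tensor $C$ or the $(0,4)$-tensor $\textbf{C}$; since $g$ is $\stackrel{h}\nabla$-parallel (Theorem~\ref{th.1}(a)), lowering an index commutes with $\stackrel{h}\nabla$, so the two formulations are equivalent and the recurrence forms $A$, $B$ transfer unchanged. One should also note that the hypothesis keeps $R$ nonzero (as required in the definitions) precisely when $r$ and the genuine curvature do not conspire to vanish; this is covered by the blanket assumption that the $h$-curvature tensor $R$ is nonzero in the statement. I would close by remarking that this theorem explains why, in the Einstein-type case $\text{Ric}=\frac{r(n-2)}{2n(n-1)}g$, conharmonic and concircular recurrence theories collapse onto one another, which dovetails with Theorem~\ref{thm.10}.
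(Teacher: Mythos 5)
Your proposal is correct and follows exactly the same route as the paper: the paper's proof is the one-line observation that under the hypothesis the concircular and conharmonic curvature tensors coincide, which is precisely the identification $\mathbb{C}=\mathbf{R}-\frac{r}{n(n-1)}\mathbf{G}=\textbf{C}$ that you compute explicitly via $g\wedge g=2\mathbf{G}$. Your version merely fills in the arithmetic and the $(1,3)$ versus $(0,4)$ bookkeeping that the paper leaves implicit.
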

\begin{proof} The proof follows from the fact that the concircular curvature tensor $C$ and
 the conharmonic curvature tensor $\mathbb{C}$ coincide, under the given assumption.
 \end{proof}

\begin{cor} In a Finsler manifold with nonzero $h$-curvature tensor
 ${R}$ satisfying  $\emph{\text{Ric}}=\frac{r(n-2)}{2n(n-1)} \, g$,  the two  notions of being  concircularly symmetric
    and conharmonically symmetric coincide.
\end{cor}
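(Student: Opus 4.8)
The plan is to obtain the corollary as an immediate consequence of Theorem~\ref{thm.11}, whose essential point is that the concircular curvature tensor and the conharmonic curvature tensor coincide once $\text{Ric}=\frac{r(n-2)}{2n(n-1)}\,g$. First I would fix the terminology: by Definition~\ref{def.2a} (the case $A=0$), $(M,L)$ is conharmonically symmetric when $\stackrel{h}{\nabla}\mathbb{C}=0$, and $(M,L)$ is concircularly symmetric when $\stackrel{h}{\nabla}\textbf{C}=0$, where $\textbf{C}$ is the covariant $\pi$-tensor field $\textbf{C}(\overline{X},\overline{Y},\overline{Z},\overline{W})=g(C(\overline{X},\overline{Y})\overline{Z},\overline{W})$ already introduced in the preliminaries, with $C=R-\frac{r}{n(n-1)}\,G$.

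Next I would record the tensor identity underlying Theorem~\ref{thm.11}. Lowering the last index of $C$ gives $\textbf{C}=\textbf{R}-\frac{r}{n(n-1)}\,\textbf{G}$. On the other hand, under the hypothesis together with Remark~\ref{rem.1} (that $g\wedge g=2\textbf{G}$), we have $g\wedge\text{Ric}=\frac{r(n-2)}{2n(n-1)}\,(g\wedge g)=\frac{r(n-2)}{n(n-1)}\,\textbf{G}$, so that $\frac{1}{n-2}(g\wedge\text{Ric})=\frac{r}{n(n-1)}\,\textbf{G}$; substituting this into $\mathbb{C}=\textbf{R}-\frac{1}{n-2}(g\wedge\text{Ric})$ from Definition~\ref{def.r1} produces $\mathbb{C}=\textbf{R}-\frac{r}{n(n-1)}\,\textbf{G}=\textbf{C}$. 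Since the Cartan connection $\nabla$ is metric ($\nabla g=0$ by Theorem~\ref{th.1}), $h$-covariant differentiation commutes with raising and lowering indices, so $\stackrel{h}{\nabla}C=0\iff\stackrel{h}{\nabla}\textbf{C}=0$, and by the identity just proved this is equivalent to $\stackrel{h}{\nabla}\mathbb{C}=0$. This is exactly the asserted equivalence of the two notions of symmetry.

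There is no real obstacle here: the entire content is the algebraic identity $\textbf{C}=\mathbb{C}$, which is precisely what Theorem~\ref{thm.11} rests on, so in practice I would write no more than ``the proof follows from Theorem~\ref{thm.11}, since the concircular and conharmonic curvature tensors coincide under the given assumption.'' The only subtleties worth a parenthetical remark are that ``symmetric'' means the vanishing of the $h$-covariant derivative of the relevant curvature tensor, and that the metricity of $\nabla$ permits free passage between $C$ and its covariant form $\textbf{C}$.
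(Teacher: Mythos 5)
Your proposal is correct and follows exactly the paper's route: the corollary is an immediate consequence of the identity $\mathbb{C}=\textbf{C}$ under the hypothesis $\text{Ric}=\frac{r(n-2)}{2n(n-1)}\,g$, which is the same fact the paper invokes for Theorem~\ref{thm.11}. Your explicit verification via $g\wedge g=2\textbf{G}$ and the remark on metricity of $\nabla$ merely spell out what the paper leaves implicit.
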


We end our paper with the following result.

\begin{thm}\label{thm.12}  Let $(M,L)$ be a generalized conharmonically recurrent Finsler manifold
 of dimension $n\geq3$ with recurrence forms $A$ and $B$. Then, we have:\vspace{-0.2cm}
\begin{description}
\item[(a)] if $\, \stackrel{h}{\nabla} \emph{\text{Ric}}=-\frac{(n-2)}{2} \, B \otimes \, g$, then  $(M,L)$ is $HGF_{n}$.

\item[(b)]  if $(M,L)$ is Ricci recurrent with  recurrence  form $A$, then  it is generalized recurrent.

\item[(c)] if $\, \stackrel{h}{\nabla} \emph{\text{Ric}}= A \otimes\emph{\text{Ric}}- \frac{(n-2)}{2}\, B \otimes \, g$, then  $(M,L)$ is recurrent.
 \end{description}
\end{thm}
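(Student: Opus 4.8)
The plan is to start from the defining relation of a generalized conharmonically recurrent manifold, namely $\stackrel{h}{\nabla}\mathbb{C}=A\otimes\mathbb{C}+B\otimes G$, and to substitute the definition $\mathbb{C}=\textbf{R}-\frac{1}{n-2}(g\wedge\textmd{\text{Ric}})$ from \eqref{eq.bt2}. Since $\stackrel{h}{\nabla}g=0$ by Theorem~\ref{th.1}(a), the Kulkarni--Nomizu product is compatible with $h$-covariant differentiation, so $\stackrel{h}{\nabla}(g\wedge\textmd{\text{Ric}})=g\wedge(\stackrel{h}{\nabla}\textmd{\text{Ric}})$. Expanding the recurrence condition therefore yields
\begin{equation*}
  \stackrel{h}{\nabla}\textbf{R}-\tfrac{1}{n-2}\,g\wedge(\stackrel{h}{\nabla}\textmd{\text{Ric}})
  =A\otimes\textbf{R}-\tfrac{1}{n-2}\,A\otimes(g\wedge\textmd{\text{Ric}})+B\otimes G,
\end{equation*}
i.e.
\begin{equation*}
  \stackrel{h}{\nabla}\textbf{R}
  =A\otimes\textbf{R}+\tfrac{1}{n-2}\,g\wedge\bigl(\stackrel{h}{\nabla}\textmd{\text{Ric}}-A\otimes\textmd{\text{Ric}}\bigr)+B\otimes G.
\end{equation*}
This single identity is the common starting point for all three parts, and obtaining it cleanly (tracking the four terms of the Kulkarni--Nomizu product and the sign conventions in \eqref{eq.b1}) is the one routine-but-delicate computation; everything after is substitution.

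For part~(a), I would insert the hypothesis $\stackrel{h}{\nabla}\textmd{\text{Ric}}=-\frac{n-2}{2}B\otimes g$ into the boxed identity. Then $\stackrel{h}{\nabla}\textmd{\text{Ric}}-A\otimes\textmd{\text{Ric}}=-\frac{n-2}{2}B\otimes g-A\otimes\textmd{\text{Ric}}$, so
\begin{equation*}
  \stackrel{h}{\nabla}\textbf{R}
  =A\otimes\textbf{R}-\tfrac{1}{n-2}\,A\otimes(g\wedge\textmd{\text{Ric}})-\tfrac12\,B\otimes(g\wedge g)+B\otimes G.
\end{equation*}
Here I would invoke Remark~\ref{rem.1}, $g\wedge g=2\textbf{G}$, and the definition $\textbf{G}(\overline X,\overline Y,\overline Z,\overline W)=g(G(\overline X,\overline Y)\overline Z,\overline W)$, so $-\frac12 B\otimes(g\wedge g)+B\otimes G=-B\otimes\textbf{G}+B\otimes\textbf{G}=0$ (identifying $G$ with $\textbf{G}$ as a $(0,4)$-tensor); and the remaining term combines with the first two via $\textbf{R}-\frac{1}{n-2}(g\wedge\textmd{\text{Ric}})=\mathbb{C}$. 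This is not yet the $HGF_n$ form, so the correct regrouping is instead to write $\stackrel{h}{\nabla}\textbf{R}=A\otimes\textbf{R}-\frac{1}{n-2}A\otimes(g\wedge\textmd{\text{Ric}})$, which shows $\stackrel{h}{\nabla}\textbf{R}=A\otimes\textbf{R}+\widetilde B\otimes(g\wedge\textmd{\text{Ric}})$ with $\widetilde B=-\frac{1}{n-2}A$; after checking $\widetilde B\neq0$ this is precisely Definition~\ref{def.1a}, so $(M,L)$ is $HGF_n$.

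For part~(b), "Ricci recurrent with recurrence form $A$" means $\stackrel{h}{\nabla}\textmd{\text{Ric}}=A\otimes\textmd{\text{Ric}}$, so the correction term $\stackrel{h}{\nabla}\textmd{\text{Ric}}-A\otimes\textmd{\text{Ric}}$ in the boxed identity vanishes identically, leaving $\stackrel{h}{\nabla}\textbf{R}=A\otimes\textbf{R}+B\otimes G$; identifying $G$ with the $(0,4)$-tensor $\textbf{G}$, this is exactly the defining relation of a generalized recurrent Finsler manifold (in the sense of \cite{Types}). For part~(c), I substitute $\stackrel{h}{\nabla}\textmd{\text{Ric}}=A\otimes\textmd{\text{Ric}}-\frac{n-2}{2}B\otimes g$; the $A\otimes\textmd{\text{Ric}}$ parts again cancel in the correction term, leaving $\stackrel{h}{\nabla}\textmd{\text{Ric}}-A\otimes\textmd{\text{Ric}}=-\frac{n-2}{2}B\otimes g$, and then as in part~(a) the $B$-terms cancel via $g\wedge g=2\textbf{G}$ and the definition of $G$, yielding $\stackrel{h}{\nabla}\textbf{R}=A\otimes\textbf{R}$, i.e. $(M,L)$ is recurrent. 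The main obstacle throughout is purely bookkeeping: keeping the four Kulkarni--Nomizu terms, the factor $n-2$, and the identification of $G$ as an operator-valued tensor versus $\textbf{G}$ as a $(0,4)$-tensor consistent, so that the cancellations in (a) and (c) come out exactly; no genuinely new idea beyond expanding \eqref{eq.bt2} is needed.
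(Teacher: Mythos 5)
Your proposal is correct and follows essentially the same route as the paper: expand $\stackrel{h}{\nabla}\mathbb{C}$ via \eqref{eq.bt2} and the metricity of the Cartan connection so that $\stackrel{h}{\nabla}(g\wedge\mathrm{Ric})=g\wedge(\stackrel{h}{\nabla}\mathrm{Ric})$, substitute the respective hypothesis on $\stackrel{h}{\nabla}\mathrm{Ric}$, and cancel $B\otimes G$ against $-\tfrac12\,B\otimes(g\wedge g)=-B\otimes\mathbf{G}$. Your coefficient $\widetilde B=-\tfrac{1}{n-2}A$ in part (a) is in fact the correct one (and is nonzero because $A\neq 0$); the paper's printed $D:=-\tfrac{1}{n-2}B$ appears to be a slip for $-\tfrac{1}{n-2}A$, which does not affect the conclusion that $(M,L)$ is $HGF_{n}$.
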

\begin{proof} ~ \par
\vspace{5pt}

\noindent $\textbf{(a)}$ Since  $(M,L)$ is generalized conharmonically recurrent with recurrence forms $A$ and $B$, then, from Definition \ref{def.2a}(\textbf{b}) and Equation (\ref{eq.bt2}), we have

 \begin{eqnarray}
 \stackrel{h}{\nabla}\mathbb{C}&=&\stackrel{h}{\nabla}\textbf{R}-\frac{1}{(n-2)}(g\wedge \stackrel{h}{\nabla}\textmd{{Ric}}) \nonumber \\
                               &=&A \otimes \{\textbf{R}-\frac{1}{(n-2)}(g\wedge \textmd{{Ric}})\}+B \otimes G \label{dd}.
 \end{eqnarray}
As $\stackrel{h}{\nabla} \emph{\text{Ric}}=-\frac{(n-2)}{2} \, B \otimes \, g$, the above relation reduces to
\begin{eqnarray*}
\stackrel{h}{\nabla}\textbf{R}&=&A \otimes \textbf{R}+D \otimes (g\wedge \textmd{{Ric}}),
 \end{eqnarray*}
where $D:=-\frac{1}{(n-2)}B $.  Consequently,  $(M,L)$ is $HGF_{n}$.

\vspace{5pt}

\noindent $\textbf{(b)}$ Since  $(M,L)$ is  Ricci recurrent   with  recurrence form $A$. Then,  we have

$$\stackrel{h}{\nabla} \emph{\text{Ric}}= A \otimes  \emph{\text{Ric}}.$$
From which, together with Equation (\ref{dd}), the result follows.

\vspace{5pt}

\noindent $\textbf{(c)}$  the proof is similar to that of item (\textbf{a}).
 \end{proof}

\bigskip

\noindent\textbf{Acknowledgments}

\vspace{5pt}
 The author express his sincere
thanks to  Professor Nabil L. Youssef for his
valuable comments and suggestions.

\par
\bigskip


\providecommand{\bysame}{\leavevmode\hbox
to3em{\hrulefill}\thinspace}
\providecommand{\MR}{\relax\ifhmode\unskip\space\fi MR }
\providecommand{\MRhref}[2]{%
  \href{http://www.ams.org/mathscinet-getitem?mr=#1}{#2}
} \providecommand{\href}[2]{#2}

\end{document}